\newtheorem{theorem}{Theorem}
\newtheorem{lemma}[theorem]{Lemma}
\newtheorem{remark}[theorem]{Remark}
\numberwithin{equation}{section} \numberwithin{theorem}{section}
\newcommand{\R}{\mathbb{R}}
\newcommand{\N}{\mathbb{N}}
\newcommand{\Z}{\mathbb{Z}}
\newcommand{\loc}{\mathrm{loc}}
\newcommand{\ep}{\varepsilon} 
\newcommand{\ph}{\varphi}
\DeclareMathOperator{\Div}{div}
\DeclareMathOperator{\supp}{supp}
\begin{document}
\title[Biharmonic wave maps into spheres]{Biharmonic wave maps into spheres}

\author[S.~Herr]{Sebastian Herr}
\address[S.~Herr]{Fakult\"at f\"ur
  Mathematik\\ Universit\"at Bielefeld\\ Postfach 10 01 31\\ 33501
  Bielefeld\\ Germany}
\email{herr@math.uni-bielefeld.de}

\author[T.~Lamm]{Tobias Lamm}
\address[T.~Lamm]{Department of Mathematics\\ 
Karlsruhe Institute of Technology \\ 
76128 Karlsruhe\\ Germany}
\email{tobias.lamm@kit.edu}

\author[R.~Schnaubelt]{Roland Schnaubelt}
\address[R.~Schnaubelt]{Department of Mathematics\\ 
Karlsruhe Institute of Technology \\ 
76128 Karlsruhe\\ Germany}
\email{schnaubelt@kit.edu}

\thanks{TL and RS gratefully acknowledge financial support by the Deutsche Forschungsgemeinschaft (DFG)
through CRC 1173.\\ The authors thank the referee for pointing out an error in an earlier version of the paper.}

\keywords{Global weak solutions, geometric plate equation, conservation law.}

\date{\today}

\subjclass[2000]{35L75, 58J45}

\begin{abstract} 
A global weak solution of the biharmonic wave map
equation in the energy space for spherical targets is constructed.
The equation is reformulated as a conservation law and solved by a suitable Ginzburg-Landau type approximation.
\end{abstract}

\maketitle
\section{Introduction}\label{sec:intro}
We study biharmonic wave maps $u:I\times \R^n\to S^l$, where $S^l$ is the $l$-dimensional unit sphere in 
$\R^{l+1}$, and $I\subset \R$ is an open interval.  These maps are critical points of the action functional
\[
\Phi(u):=\frac12 \int_{I \times \R^n} (|\partial_t u|^2-|\Delta u|^2)\, d(t,x)
\]
acting on functions with values in $S^l$. Here $\Delta u =(\Delta u^ i)_i$ is the extrinsic Laplacian; 
i.e., the Laplacian w.r.t.\ $x$ when considering $u$ as a map into $\R^{l+1}$. In  our main Theorem~\ref{thm:global}
we construct a global weak solution for all data in the energy space.

We introduce two equivalent versions of the biharmonic wave map system  for regular solutions.
Sufficiently smooth critical points  $u:I\times \R^n\to S^l$ of $\Phi$ satisfy
\begin{equation}\label{EQsphere0}
(\partial_t^2+\Delta^2 )u \perp T_uS^l
\end{equation}
which can be viewed as the geometric version of biharmonic wave map equation
To show this claim, for $\varphi\in C_c^\infty(I\times \R^{n},\R^{l+1})$ we consider the variation 
$u_\tau=\pi(u+\tau\varphi)$, where $\pi:\R^{l+1}\setminus \{0\} \to S^l $ denotes the retraction $\pi(y)=\frac{y}{|y|}$, 
and $\tau>0$ is small enough. We compute
\begin{align*}
\frac{\mathrm{d}}{\mathrm{d}\tau}\Phi(u_\tau)\Big|_{\tau=0}=&\int_{I \times \R^n}\langle \partial_t u ,\partial_t D\pi(u)\varphi\rangle -\langle \Delta u , \Delta  D\pi(u)\varphi\rangle d(t,x)\\
=&-\int_{I \times \R^n}\langle \partial_t^2 u +\Delta^2 u,D\pi(u)\varphi\rangle d(t,x)
\end{align*}
Choose a smooth orthonormal frame $\{v_1(t,x),\ldots,v_l(t,x)\}$ for $T_{u(t,x)}S^l$, a scalar function 
$\eta\in C_c^\infty(I\times \R^{n},\R)$ and define $\varphi_j=v_j \eta$, for $j=1,\ldots,l$. Since $D\pi(u)\varphi_j=\varphi_j$, 
for a critical point $u$ of $\Phi$ we obtain
\[
0=\int_{I \times \R^n}\langle \partial_t^2 u +\Delta^2 u,v_j\rangle \eta d(t,x).
\]
We conclude  that $\langle \partial_t^2 u +\Delta^2 u,v_j\rangle=0$ for any $j=1,\ldots,l$, which shows \eqref{EQsphere0}.

For smooth $u:I\times \R^n\to S^l$, equation \eqref{EQsphere0} is equivalent to the  PDE-version
\begin{align}
 (\partial_t^2+\Delta^2 )u=\big[\,|\Delta u|^2-|\partial_t u|^2-\Delta |\nabla u|^2
     -2\Div \langle \Delta u,\nabla u\rangle \big]\, u\label{EQsphere}
 \end{align}
of the biharmonic wave map system.
 Here and below, for any  $A\in \R^{(l+1)\times (l+1)}$ the expression 
$\Div \langle \Delta u,A\nabla u\rangle$ is shorthand
for $\sum_{i=1}^n \partial_i \langle \Delta u, A\partial_i u\rangle$, where
$\langle\cdot,\cdot\rangle$ is the scalar product in $\R^{l+1}$. We also write
$\langle \nabla \Delta u, \nabla u\rangle= \sum_{i=1}^d  \langle \partial_i  \Delta u,\partial_i u\rangle$
etc.. Moreover, $|\cdot|$ denotes the Euclidean norm in $\R^{l+1}$ and in $\R^{(l+1)(l+1)}$.

We show the above mentioned equivalence. Equation \eqref{EQsphere0} means that there is 
a function $\lambda_u: I\times \R^n\to\R$ such that $(\partial_t^2+\Delta^2 )u=\lambda_u u$.
A solution to \eqref{EQsphere} of course satisfies this identity with $\lambda_u=[\ldots]$. To see 
the converse, we multiply $(\partial_t^2+\Delta^2 )u=\lambda_u u$ by $u$ and use the product rule. 
It follows that
\begin{align*} 
\lambda_u &= \langle \partial_t^2 u, u\rangle + \langle \Delta^2 u, u\rangle
  = \partial_t \langle \partial_t u, u\rangle - \langle \partial_t u, \partial_t u\rangle 
      + \Div \langle \nabla \Delta u, u\rangle -\langle \nabla \Delta u, \nabla  u\rangle.
\end{align*}
By $|u|^2=1$, we have $2\langle \partial_k u, u\rangle=\partial_k|u|^2=0$ for $k\in\{t,1,\ldots,n\}$.
We then compute
\begin{align*} 
\lambda_u &= -|\partial_t u|^2 + \Delta \langle \Delta u, u\rangle - 2\Div \langle \Delta u,\nabla u\rangle
              + \langle \Delta u, \Delta u\rangle\\
          &=   -|\partial_t u|^2  + \Delta \Div \langle \nabla u, u\rangle 
            - \Delta \langle \nabla u, \nabla u\rangle - 2\Div \langle \Delta u,\nabla u\rangle
              + |\Delta u|^2\\
          & =|\Delta u|^2-|\partial_t u|^2-\Delta |\nabla u|^2
     -2\Div \langle \Delta u,\nabla u\rangle,
\end{align*}
as asserted.

The energy corresponding to $\Phi$ is given by 
\[
E[u](t)=\frac12 \int_{\{t\} \times \R^n} (|\partial_t u|^2+|\Delta u|^2)\,  dx.
\]
We thus introduce the space for (global) weak solutions of our problems as
\[ Z= \{ u\in L^ \infty(\R\times \R^n, \R^{l+1})\,|\, u(t,x)\in S^l \text{  a.e., } 
\partial_t u, \Delta u\in  L^2_{\mathrm{loc}}(\R;L^2(\R^n,\R^{l+1}))\}.\]
As above, one observes that 
\[\langle \Delta u, u\rangle = \Div \langle \nabla u, u\rangle - \langle \nabla u, \nabla u\rangle 
=- |\nabla u|^2,\]
so that each $u\in Z$ satisfies
\begin{equation}\label{L4}
 |\nabla u|^2\le |\Delta u| \qquad\text{and}\qquad  |\nabla u|\in L^4_{\mathrm{loc}}(\R; L^4(\R^n)).
\end{equation}
A  \emph{weak solution} of \eqref{EQsphere0} is defined as a map $u\in Z$ 
fulfilling 
\begin{equation}\label{weak0}
0 =\int_{\R\times \R^n} \big(-\langle \partial_t u, \partial_t v\rangle
+ \langle \Delta u, \Delta v\rangle \big)\, dt\, dx
\end{equation}
for all  functions $v$ belonging to
\begin{align*}
&V:=\{v\in L^\infty(\R\times \R^n,  \R^{l+1})\,|\, |\partial_t v|, |\nabla v|^ 2, |\Delta v|  
       \in L^2(\R\times \R^ n); \; \supp_t v \text{ is compact}, \\
&\qquad\qquad \qquad\qquad \text{and $v$ satisfies} \quad  v(t,x)\in T_{u(t,x)}S^l \text{ \ for a.e.  }   (t,x)\in \R\times \R^{n} \},
\end{align*}
where $\supp_t$ denotes the support of $v:\R\to L^\infty(\R^n,  \R^{l+1})$, $t\mapsto v(t,\cdot)$.
Moreover, $u\in Z$ is a \emph{weak solution} of \eqref{EQsphere} if 
\begin{equation}\label{weak}
\begin{split}
&\int_{\R\times \R^n} (-\langle \partial_t u, \partial_t \phi\rangle 
  + \langle \Delta u, \Delta \phi\rangle)\, dt\, dx\\
  & = \int_{\R\times \R^n} \big( (|\Delta u|^2-|\partial_t u|^2) \langle u,\phi\rangle 
     -  |\nabla u|^2 \,\Delta \langle u,\phi\rangle
    +2\langle \Delta u,\nabla u\rangle \, \nabla \langle u,\phi\rangle\big) dt\, dx
  \end{split}  
\end{equation}  
for all $\phi\in W$, where we put
 \[
 W:=\{v\in L^\infty(\R\times \R^n,  \R^{l+1})\,|\, |\partial_t v|, |\nabla v|^ 2, |\Delta v|  
       \in L^2(\R\times \R^ n); \; \supp_t v \text{ is compact} \}.
 \] 
 Note that the terms on the right hand side in this definition are integrable by \eqref{L4}. 
 In Lemma~\ref{divergence} we prove the equivalence of the weak solvability of \eqref{EQsphere0}
 and of \eqref{EQsphere}.

The fourth order system \eqref{EQsphere} is analogous to the (second order) wave maps system, 
see e.g.\ \cite{ShSt}. In this situation global weak solutions in the energy space have been constructed by Shatah \cite{shatah} for spherical targets and by Freire \cite{Fr} for target manifolds being homogeneous spaces. These constructions use a suitable Ginzburg-Landau type approximation of \eqref{EQsphere}. Our main result is a variant of the result of Shatah for biharmonic wave maps. 
\begin{theorem}\label{thm:global}
Let $(u_0,u_1)\in L^\infty(\R^n,\R^{l+1})\times L^2(\R^n,\R^{l+1})$ satisfy $\Delta u_0\in L^2(\R^n,\R^{l+1})$
as well as $u_0(x)\in S^l$ and $u_1(x)\in T_{u_0(x)}S^l$ for a.e.\ $x\in \R^n$. Then there is a global
weak solution $u\in Z$ of \eqref{EQsphere}  with $u(0)=u_0$ and $\partial_t u(0)=u_1$. 
Moreover, the maps $\Delta u, \partial_t u:\R\to L^2(\R^n,\R^{l+1})$ are weakly continuous and 
bounded, we have $u(t,\cdot)\in S^ l$ and  $\partial_t u(t,\cdot)\in T_{u(t,\cdot)}S^ l$ a.e.\ for each $t\in\R$,
and the difference $u-u_0$ is weakly continuous as a map from $\R$ to $H^\theta(\R^n,\R^{l+1})$ for all $\theta\in [0,2)$.
Finally, for all $t\in\R$ the solution satisfies the energy inequality
\[ E[u](t)=\frac12 \int_{\{t\} \times \R^n} (|\partial_t u|^2+|\Delta u|^2)\,  dx 
\leq E[u](0)=\frac12 \int_{\R^n} (|u_1|^2+|\Delta u_0|^2)\,  dx.  \]
\end{theorem}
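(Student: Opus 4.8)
\emph{Proof strategy.} The plan is to adapt Shatah's scheme for wave maps, replacing the second-order operator by $\partial_t^2+\Delta^2$ and using the conservation-law form \eqref{EQsphere0} of the limit equation. For $\al>0$ I would regularize the constraint $|u|=1$ by a Ginzburg--Landau penalization and study the semilinear plate equation
\begin{equation*}
 \partial_t^2\ua+\Delta^2\ua+\tfrac1\al(|\ua|^2-1)\ua=0,\qquad \ua(0)=u_0,\ \partial_t\ua(0)=u_1,
\end{equation*}
whose associated energy is $\Ea[\ua](t)=\frac12\int_{\R^n}(|\partial_t\ua|^2+|\Delta\ua|^2)\,dx+\frac1{4\al}\int_{\R^n}(|\ua|^2-1)^2\,dx$. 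For fixed $\al$ I would construct a solution by a Galerkin approximation (which avoids restrictions on $n$ coming from the cubic nonlinearity), obtain enough regularity to justify the computations below, and extend it globally in time, since multiplying the equation by $\partial_t\ua$ shows that $\Ea[\ua]$ is conserved. As the data are well prepared, i.e.\ $|u_0|=1$, we have $\Ea[\ua](0)=E[u](0)$, so the bound is uniform in $\al$.

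\emph{Uniform bounds and compactness.} Conservation of $\Ea$ yields, uniformly in $\al$, that $\partial_t\ua$ and $\Delta\ua$ are bounded in $L^\infty(\R;L^2)$ and that $\||\ua|^2-1\|_{L^2}^2\le 4\al\,E[u](0)\to0$. Writing $\ua(t)-u_0=\int_0^t\partial_s\ua\,ds$ bounds $\ua-u_0$ in $L^2(\R^n)$ on compact time intervals, and together with the $L^2$--bound on $\Delta(\ua-u_0)=\Delta\ua-\Delta u_0$ this bounds $\ua-u_0$ in $L^\infty_{\loc}(\R;H^2(\R^n))$. Applying the Aubin--Lions lemma with $H^2\hookrightarrow\hookrightarrow H^1_{\loc}$ and $\partial_t\ua$ bounded in $L^2$, I would extract a subsequence with $\ua\to u$ and, crucially, $\nabla\ua\to\nabla u$ \emph{strongly} in $L^2_{\loc}$, while $\partial_t\ua\rightharpoonup\partial_t u$ and $\Delta\ua\rightharpoonup\Delta u$ weakly in $L^2$. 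The strong $L^2_{\loc}$--convergence of $\ua$ together with the penalty bound forces $|u|^2=1$ a.e., so $u\in Z$.

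\emph{Passage to the limit.} I would test against tangential fields $v=Bu$, where $B\in C_c^\infty(\R\times\R^n,\R^{(l+1)\times(l+1)})$ is skew-symmetric; since every $v\in V$ equals $Bu$ with $B=vu^T-uv^T$, such $B$ suffice, after approximation by smooth fields, to verify \eqref{weak0}, which by Lemma~\ref{divergence} is equivalent to \eqref{weak}. Testing the approximate equation with $B\ua$ annihilates the penalty term, because $\langle\ua,B\ua\rangle=0$, and after integrating by parts leaves
\begin{equation*}
0=\int_{\R\times\R^n}-\langle\partial_t\ua,(\partial_tB)\ua\rangle+\langle\Delta\ua,(\Delta B)\ua\rangle+2\sum_{i=1}^n\langle\Delta\ua,(\partial_iB)\partial_i\ua\rangle\,dt\,dx,
\end{equation*}
where skew-symmetry of $B$ has removed the terms $\langle\partial_t\ua,B\partial_t\ua\rangle$ and $\langle\Delta\ua,B\Delta\ua\rangle$. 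In each remaining product one factor converges weakly in $L^2$ ($\partial_t\ua$ or $\Delta\ua$) and the other strongly in $L^2_{\loc}$ ($\ua$ or $\nabla\ua$), so I can pass to the limit and obtain the same identity for $u$, i.e.\ \eqref{weak0} with $v=Bu$.

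\emph{Main obstacle and remaining assertions.} The one term with no analogue in Shatah's second-order construction is $\langle\Delta\ua,(\partial_iB)\partial_i\ua\rangle$: a priori it is a product of the two only weakly convergent sequences $\Delta\ua$ and $\nabla\ua$, whose limit cannot be identified by weak convergence alone. Overcoming this is exactly the purpose of the strong convergence $\nabla\ua\to\nabla u$ in $L^2_{\loc}$ obtained above, and I expect it to be the heart of the proof. The remaining statements are then routine: the uniform $L^\infty(L^2)$--bounds and the equation give weak continuity of $\partial_t u$ and $\Delta u$ into $L^2$; interpolating $u-u_0\in L^\infty_{\loc}(H^2)\cap C(L^2)$ yields weak continuity into $H^\theta$ for $\theta\in[0,2)$ and the attainment of the data; differentiating $|u|^2=1$ gives $\partial_t u(t)\in T_{u(t)}S^l$; and the energy inequality follows from the weak lower semicontinuity of $v\mapsto\|v\|_{L^2}^2$ applied to $\partial_t\ua(t)\rightharpoonup\partial_t u(t)$ and $\Delta\ua(t)\rightharpoonup\Delta u(t)$, together with $\Ea[\ua](t)=\Ea[\ua](0)\to E[u](0)$.
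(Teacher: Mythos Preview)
Your overall strategy---Ginzburg--Landau penalization, killing the penalty term by testing against skew-symmetric $B$, and upgrading $\nabla u_\alpha$ from weak to strong $L^2_{\loc}$ convergence via compactness---is exactly the paper's. The paper uses semigroup theory with several regularization layers instead of Galerkin, and Arzel\`a--Ascoli with the interpolation $L^\infty(J;H^2)\cap W^{1,\infty}(J;L^2)\hookrightarrow C^{1-\beta}(J;H^{2\beta})$ instead of Aubin--Lions, but these are interchangeable devices for the same compactness.

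There are, however, two technical points you glide over that the paper treats carefully. First, $u_0\in L^\infty$ with $\Delta u_0\in L^2$ need \emph{not} lie in $L^2(\R^n)$ (a constant map to a point of $S^l$ is the model case), so a Galerkin scheme for $u_\alpha$ itself cannot be set up in $H^2$; the paper shifts to $v_\varepsilon=u_\varepsilon-u_0$ and, crucially, replaces your raw cubic penalty by a smooth bounded $F$ with $\nabla F$ compactly supported, so that the shifted nonlinearity $\nabla F(u_0+v)$ is globally Lipschitz in $v\in L^2$ in every dimension. With the unmodified cubic your Galerkin limit would have to handle $|u_0+v|^2(u_0+v)$ for $v\in H^2$, which for $n\ge5$ is delicate. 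Second, the attainment $\partial_t u(0)=u_1$ is not routine: because the penalty term is of size $1/\alpha$, the sequence $\partial_t^2 u_\alpha$ is not uniformly bounded in any negative Sobolev space, so you cannot infer $\partial_t u_\alpha(0)\to\partial_t u(0)$ from compactness. The paper instead integrates the conservation law \eqref{eqdivergence} (which contains no penalty) over $[0,\infty)$ for both $u_\alpha$ and $u$, subtracts, lets $\alpha\to0$, and then uses that both $u_1$ and $\partial_t u(0)$ lie in $T_{u_0}S^l$ to conclude $\langle\partial_t u(0)-u_1,\Lambda u_0\rangle=0$ for all skew-symmetric $\Lambda$, hence $\partial_t u(0)=u_1$.
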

Scalar fourth order wave equations, such as the beam equation, have been studied previously in \cite{HePa} or \cite{P2010}. 
In the case of biharmonic wave maps, the authors together with T. Schmid, recently showed in \cite{HLSS} a local well-posedness results for maps taking values in arbitrary compact target manifolds, assuming that the initial data are regular enough.
This result was then used by T. Schmid \cite{Schmid} in order to show the existence of a unique global smooth solution for smooth and compactly supported initial data in the cases $n=1,2$. This extends earlier work of Fan and Ozawa \cite{fan}.
Finally, we want to mention that weak solutions for the parabolic variant of the problem, the so called biharmonic map heat flow, have been constructed under certain restrictions on the dimension $n$ in \cite{Gastel}, \cite{Lamm}, \cite{Moser} and \cite{Wang}.
 
We note that there is a second functional which also deserves to be called the action functional corresponding 
to biharmonic wave maps, namely
\[
\Psi(u):=\frac12 \int_{\R\times \R^n} (|\partial_t u|^2-|(\Delta u)^T|^2)\, dt\, dx
  =\frac12 \int_{\R\times \R^n} (|\partial_t u|^2-|\Delta u|^2+|\nabla u|^4)\, dt\, dx,
\]
where $(\Delta u)^T=\Delta u+u|\nabla u|^2$ is the tangential component of the Laplacian. In this case critical 
points satisfy the PDE
\[
\partial_t^2 u+\Delta^2 u+2\Div(|\nabla u|^2 \nabla u)\perp T_uS^l
\]
or equivalently
\begin{equation}\label{EQsphere2}
\begin{split}
 \partial_t^2u+\Delta^2 u+2\Div(|\nabla u|^2 \nabla u)=u\big(
&|\Delta u|^2-|\partial_t u|^2-\Delta |\nabla u|^2\\
&-2\Div \langle \Delta u,\nabla u\rangle -2|\nabla u|^4\big).
\end{split}
\end{equation}
Due to the additional nonlinear term, our proof of Theorem
\ref{thm:global} does not extend to this
equation. 

\section{The conservation law} 

As a first result we show that the systems \eqref{EQsphere0} and
\eqref{EQsphere} are also equivalent in 
the weak sense and that they can be can be written in divergence form \eqref{eqdivergence}. The latter 
fact will be crucial for our global existence result. 
\begin{lemma}\label{divergence}
For $u\in Z$ the following assertions are equivalent.
\begin{enumerate}
\item The map $u$ is a weak solution of \eqref{EQsphere0}.
\item The map $u$ is a weak solution of \eqref{EQsphere}.
\item For all skew-symmetric matrices $\Lambda \in \R^{(l+1)\times (l+1)}$ the map $u$ is a weak solution of the system
  \begin{align}
   0=\partial_t\langle \partial_t u , \Lambda u\rangle+\Delta \langle \Delta u, \Lambda u\rangle 
   -2\Div \langle \Delta u ,\Lambda \nabla u\rangle \label{eqdivergence}
   \end{align}	
  on $\R\times \R^n$ with test functions in $W_s$, where
  \[
  W_s:=\{v\in L^\infty(\R\times \R^n)\,|\, |\partial_t v|, |\nabla v|^ 2, |\Delta v|  
       \in L^2(\R\times \R^ n); \; \supp_t v \text{ is compact} \}.
       \]
\end{enumerate}
\end{lemma}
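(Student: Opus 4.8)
The plan is to use condition (1), the weak form of the geometric equation \eqref{EQsphere0}, as a hub and to establish the two equivalences (1)~$\Leftrightarrow$~(3) and (1)~$\Leftrightarrow$~(2) separately. The single algebraic fact driving everything is that for a skew-symmetric $\Lambda\in\R^{(l+1)\times(l+1)}$ one has $\langle \Lambda y,y\rangle=0$, so that $\Lambda u(t,x)\in T_{u(t,x)}S^l$ whenever $u(t,x)\in S^l$; moreover these vectors span the tangent space, since for $w\perp y$ with $|y|=1$ the skew matrix $\Omega=wy^T-yw^T$ satisfies $\Omega y=w$.

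For (1)~$\Leftrightarrow$~(3) I would, given a scalar $\psi\in W_s$ and skew $\Lambda$, insert $v=\psi\,\Lambda u$ (which lies in $V$ by the above together with a regularity check using \eqref{L4}) into \eqref{weak0}. Expanding $\partial_t v$ and $\Delta v$ by the Leibniz rule and discarding the terms $\langle \partial_t u,\Lambda\partial_t u\rangle$ and $\langle \Delta u,\Lambda\Delta u\rangle$, which vanish by skew-symmetry, turns \eqref{weak0} precisely into the weak form of \eqref{eqdivergence} tested against $\psi$. This gives (1)~$\Rightarrow$~(3) immediately by specializing $v$. For the converse I would write a general tangent-valued $v\in V$ as $v=\Omega u$ with $\Omega=vu^T-uv^T$ and expand $\Omega=\sum_a \psi_a\Lambda_a$ in a basis $\{\Lambda_a\}$ of the skew-symmetric matrices; the coefficients $\psi_a$ are bilinear in $(u,v)$, hence lie in $W_s$ (again by \eqref{L4}), and linearity of \eqref{weak0} together with the identity just established yields \eqref{weak0} for $v$.

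For (1)~$\Leftrightarrow$~(2) I note that $V\subset W$, and that for tangent-valued $\phi=v$ the scalar $\langle u,v\rangle$ vanishes identically, so its gradient and Laplacian vanish and the entire right-hand side of \eqref{weak} drops out; thus \eqref{weak} reduces to \eqref{weak0}, proving (2)~$\Rightarrow$~(1). Conversely I would split an arbitrary $\phi\in W$ as $\phi=\phi^T+\rho u$ with $\rho=\langle u,\phi\rangle$ and $\phi^T=\phi-\rho u\in V$. By linearity of both sides of \eqref{weak} it suffices to check it on each piece: on $\phi^T$ the right-hand side vanishes because $\langle u,\phi^T\rangle=0$, while the left-hand side vanishes by (1); on $\rho u$, expanding with the Leibniz rule and using $|u|^2=1$, $\langle\partial_k u,u\rangle=0$ and $\langle\Delta u,u\rangle=-|\nabla u|^2$ shows that \eqref{weak} becomes a pointwise identity, independent of the equation.

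The main obstacle is not the algebra but its justification in the low-regularity class $Z$: one controls only $\partial_t u,\Delta u\in L^2_{\loc}$ and $\nabla u\in L^4_{\loc}$, not the full Hessian, so each use of the Leibniz rule for $\Delta$ and each integration by parts must be checked to produce only these quantities and to land in the correct Lebesgue space, exactly the integrability guaranteed by \eqref{L4}; I would confirm these product rules by mollification. The coefficient functions $\psi_a$ and $\rho$ and the tangential part $\phi^T$ must likewise be verified to satisfy the defining bounds of $W_s$ and $V$, which follows by H\"older's inequality from \eqref{L4}.
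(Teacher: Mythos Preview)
Your proposal is correct and follows essentially the same argument as the paper; the only difference is organizational, as the paper proves the cycle $(1)\Rightarrow(3)\Rightarrow(2)\Rightarrow(1)$ rather than using $(1)$ as a hub. The ingredients---the test function $v=\psi\,\Lambda u$ with the skew-symmetry cancellations $\langle\partial_t u,\Lambda\partial_t u\rangle=\langle\Delta u,\Lambda\Delta u\rangle=0$, the tangential/normal splitting of a general $\phi\in W$, the basis $\Lambda_{ij}=e_i\otimes e_j-e_j\otimes e_i$ together with $v=(vu^T-uv^T)u$, and the direct computation on the normal part via $\langle\partial_t u,u\rangle=0$ and $\langle\Delta u,u\rangle=-|\nabla u|^2$---are exactly the ones the paper uses.
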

\begin{proof}
{\it (1)$\,\Rightarrow$(3)}. Let $u$ be a weak solution of \eqref{EQsphere0}. Take
$\varphi \in V_s$ and $\Lambda \in \R^{(l+1)\times (l+1)}$ with  $\Lambda^T=-\Lambda$. 
The function $v=\varphi \Lambda u$ belongs to $V$ by \eqref{L4} and takes values
in $T_u S^l$ since $\Lambda$ is skew-symmetric. We  thus obtain
\begin{align*}
0&= \int_{\R\times \R^n} \left(-\langle \partial_t u,\partial_t (\varphi \Lambda u)\rangle 
     +\langle \Delta u,\Delta (\varphi (\Lambda u)\rangle \right)\,dt\, dx\\
&= \int_{\R\times \R^{n}} \left( -\partial_t \varphi \langle \partial_t u,\Lambda u\rangle 
     +\Delta \varphi \langle \Delta u,\Lambda u\rangle 
     +2 \nabla \varphi \langle \Delta u,\Lambda \nabla u\rangle \right)\,dt\, dx,
\end{align*}
using that $\langle \partial_t u, \Lambda \partial_t u\rangle=0= \langle \Delta u, \Lambda \Delta u\rangle$. 
Hence, $u$ is a weak solution of \eqref{eqdivergence}.

{\it (3)$\,\Rightarrow$(2)}. Let  $u$ be a weak solution of \eqref{eqdivergence}.
We employ for $1\le i<j\le l+1$ and $\omega \in S^l$ the tangent vectorfields
\[
\Lambda_{ij} \omega=(e_i\otimes e_j -e_j \otimes e_i)\omega =\omega^ie_j-\omega^je_i\in T_\omega S^l.
\]
These vectorfields span $T_\omega S^l$ since each $\xi \in T_\omega S^l$ has the representation
\[
\xi=(\omega \otimes \xi- \xi \otimes \omega)\omega 
   =\sum_{1\le i<j\le l+1} (\omega^i \xi^j-\omega^j\xi^i)\Lambda_{ij} \omega.
\]
For a given function $\phi\in W$ we deduce
\begin{align}\label{eq:Lambda}
\phi &= \langle \phi,u\rangle u+\sum_{1\le i<j\le l+1} \varphi_{ij} \Lambda_{ij} u,\qquad \text{where}\\
\varphi_{ij} &:= u^i(\phi^j-\langle \phi,u\rangle u^j)-u^j(\phi^i-\langle \phi,u\rangle u^i).	\notag
\end{align}
Note that all maps $\varphi_{ij}\Lambda_{ij}u$ and $\langle \phi,u\rangle u$ belong to $W$, and  
$\varphi_{ij}$ to $W_s$. Assertion (3) then yields
\begin{align*}
&\int_{\R\times \R^n} \big( -\langle \partial_t u,\partial_t (\varphi_{ij} \Lambda_{ij}u)\rangle 
  +\langle \Delta u ,\Delta (\varphi_{ij} \Lambda_{ij}u\rangle \Big)\,dt\, dx\\
&= \int_{\R\times \R^{n}} \Big( -\partial_t \varphi_{ij} \langle \partial_t u,\Lambda_{ij} u\rangle 
  +\Delta \varphi_{ij} \langle \Delta u,\Lambda_{ij} u\rangle 
 +2\nabla \varphi_{ij} \langle \Delta u,\Lambda_{ij} \nabla u\rangle \Big) \,dt\, dx\\
&=0,
\end{align*}
where two terms vanish because of the skew-symmetry of $\Lambda_{ij}$.
For the normal component, we compute
\begin{align*}
& \int_{\R\times \R^n} \left( -\langle \partial_t u,\partial_t (\langle \phi,u\rangle u)\rangle 
 +\langle \Delta u ,\Delta (\langle \phi,u\rangle u)\rangle \right)\,dt\, dx\\
&= \int_{\R\times \R^n} \!\big({-}|\partial_t u|^2\langle u,\phi\rangle +|\Delta u|^2 \langle u,\phi \rangle 
  +2\langle \Delta u,\nabla u  \rangle \,\nabla \langle \phi,u\rangle
   +\langle \Delta u,u\rangle \,\Delta \langle \phi,u\rangle \big)\,dt\, dx\\
    &= \int_{\R\times \R^n} \big( (|\Delta u|^2-|\partial_t u|^2) \langle u,\phi\rangle 
     -  |\nabla u|^2 \,\Delta \langle u,\phi\rangle
    +2\langle \Delta u,\nabla u\rangle \, \nabla \langle u,\phi\rangle\big) dt\, dx
\end{align*}  
since $\langle \partial_t u, u\rangle= 0$ and $\langle \Delta u,u\rangle=-|\nabla u|^2$. Summing up, the 
decomposition \eqref{eq:Lambda} implies  that $u$ solves \eqref{EQsphere} weakly.

{\it (2)$\,\Rightarrow$(1)}. Let $u$ be a weak solution of \eqref{EQsphere}. For test functions $v\in W$  
taking values in $T_{u(t,x)}S^l$ equation \eqref{weak0} follows from \eqref{weak} since then $\langle u,v\rangle =0$. 
\end{proof}

Note that the conservation law can also be obtained via Noether's theorem. For any map $u$ the action 
functional $\Phi$ is invariant under rotations $R(\tau)u=\exp(\tau\Lambda)u$. This fact implies 
\begin{align*}
0&= \frac{\mathrm{d}}{\mathrm{d}\tau} \Phi(R(\tau)u,\Omega)\Big|_{\tau=0}
=\int_\Omega \left( \langle \partial_t u, \partial_t(\Lambda u)\rangle 
    -\langle \Delta u,\Delta (\Lambda u)\rangle \right)\,dt\, dx\\
&= \int_\Omega \left( \partial_t \langle \partial_t u,\Lambda u\rangle 
 +\Delta \langle \Delta u,\Lambda u \rangle -2\Div \langle \Delta u, \Lambda \nabla u\rangle \right)\,dt\, dx\\
&\quad -\int_\Omega \langle (\partial_t^2+\Delta^2 )u,\Lambda u\rangle \,dt\, dx. 
\end{align*}
for each subset $\Omega\subset \R\times \R^n$. The second integral vanishes since 
$(\partial_t^2+\Delta^2)u\perp T_uS^l$, and hence we have derived again the conservation law 
\eqref{eqdivergence}.

\begin{remark}\label{rem:tang-lapl}
Similar to Lemma \ref{divergence} one can prove that
a (smooth) map $u$ is a solution of \eqref{EQsphere2} iff for all $\Lambda \in \R^{(l+1)\times (l+1)}$ with 
$\Lambda^T=-\Lambda$ we have
\begin{align}
0=\partial_t\langle \partial_t u , \Lambda u\rangle+\Delta \langle \Delta u, \Lambda u\rangle 
   -2\Div \langle \Delta u ,\Lambda \nabla u\rangle+2\Div \langle |\nabla u|^2 \nabla u,\Lambda u\rangle . 
   \label{eqdivergence2}
\end{align}
\end{remark}

\section{Existence of a global weak solution}\label{subsec:gws}
In this section we construct a global weak solution of \eqref{EQsphere} using 
a penalization method as in \cite{Fr}. To this end, we fix an increasing function
$\chi\in C^\infty([0,\infty))$ with $\chi(s)= 1$ for all $s\ge \frac12$ and $\chi(s) =s$ for all 
$s\le \frac14$.  We then define the smooth map $F:\R^{l+1} \to\R$ by
\[
F(x)=\chi\circ (|x|^2-1)^2.
\]
Observe that   $F$ is bounded,  its derivatives are compactly  supported, $F^{-1}(0)=S^l$, and $\nabla F(x)=0$
if $|x|=1$. 

For $\ep>0$ and initial functions
 $(u_0,u_1)\in L^\infty(\R^n,\R^{l+1})\times L^2(\R^n,\R^{l+1})$ with $\Delta u_0\in L^2(\R^n,\R^{l+1})$
 and $u_0(x)\in S^l$ for almost every $x\in \R^n$, we look at the 
 auxiliary system
 \begin{equation}\label{eqapprox}
\begin{split}
\partial_t^2u_\ep+\Delta^2 u_\ep+\tfrac1\ep\nabla F(u_\ep)&=0\\
u_\ep(0,\cdot)=u_0,\ \ \ \partial_t u_\ep(0,\cdot)&=u_1, 
\end{split}
\end{equation}
without requiring that $u(t,x)\in S^l$ if $t\neq0$ a.e..
We point out that the initial value $u_0$ here (and below) is not square-integrable, which causes
technical difficulties. In contrast to the wave map case in \cite{Fr},
solutions of \eqref{eqapprox} do not possess finite speed of propagation so that standard cut-off arguments cannot be used.
 Instead we look for (distributional) solutions of the form $u_\ep=u_0+v_\ep$ for a function 
$v_\ep$ solving the shifted system
\begin{equation}\label{eqapprox1}
\begin{split}
\partial_t^2v_\ep+\Delta^2 v_\ep+\tfrac1\ep\nabla F(u_0+v_\ep) +\Delta^2 u_0 &=0  \\
v_\ep(0,\cdot)=0,\ \ \ \partial_t v_\ep(0,\cdot)&=u_1, 
\end{split}
\end{equation}
weakly, with test  functions in $W$. 
  For brevity, we sometimes write $H^k$ instad of $H^k(\R^n,\R^{l+1})$ 
for $k\in \Z$, and analogously for other function spaces.

We use  the following fact. Let $Z\hookrightarrow Y$ be reflexive Banach spaces, $Z$ be dense 
in $Y$, and $f:\R\to Y$ be a weakly continuous function which is essentially bounded with values in $Z$.
Then $f$ is bounded and weakly continuous as a map into $Z$.

\begin{lemma}\label{lem:v-ep}
Let $\ep>0$ and  $(u_0,u_1)$ belong to $L^\infty(\R^n,\R^{l+1})\times L^2(\R^n,\R^{l+1})$ with 
$\Delta u_0\in L^2(\R^n,\R^{l+1})$ and $u_0(x)\in S^l$ for almost every $x\in \R^n$. Then  there is a distributional
 solution $u_\ep=u_0+v_\ep$ of \eqref{eqapprox} such that 
 $\Delta v_\ep, \partial_t v_\ep \in L^\infty(\R, L^2(\R^n,\R^{l+1}))$,
the functions $v_\ep:\R\to H^2(\R^n,\R^{l+1})$ and $\partial_t v_\ep :\R\to L^2(\R^n,\R^{l+1})$
 are weakly continuous, and $v_\ep$ solves \eqref{eqapprox1} weakly with test functions in $W$. 
 For all $t\in\R$, we have the energy inequality
\begin{align}
E_\varepsilon [u_\varepsilon](t)
 &:= \int_{\{t\}\times \R^n} \left( \frac12\, |\partial_t u_\varepsilon |^2+\frac12\, |\Delta u_\varepsilon |^2 
  +\frac{1}{\varepsilon} F(u_\varepsilon )\right)\, dx \nonumber \\
&\le E_\varepsilon[u_\varepsilon](0)= \frac12 \int_{\R^n} \left( |u_1|^2+|\Delta u_0|^2\right)\,dx =:E_0.\label{energycons}
\end{align}
\end{lemma}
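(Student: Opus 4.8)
The plan is to construct $v_\ep$ via a Galerkin approximation combined with the energy estimate \eqref{energycons}, which is the natural conserved (or dissipated) quantity for the fourth-order equation \eqref{eqapprox}. The central difficulty, flagged already in the text, is that $u_0$ is only in $L^\infty$ with $\Delta u_0\in L^2$ but is \emph{not} square-integrable, so one cannot work directly with $u_\ep$; instead all estimates must be carried out for the shifted unknown $v_\ep$ solving \eqref{eqapprox1}, for which zero initial position and $L^2$ initial velocity are available. The forcing term $\Delta^2 u_0$ is a fixed element of $H^{-2}$, and the penalization term $\tfrac1\ep\nabla F(u_0+v_\ep)$ is bounded since $\nabla F$ is compactly supported (hence bounded) by the construction of $F$; these are the two inputs that make the a priori energy bound usable.

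First I would set up a Faedo–Galerkin scheme: choose an orthonormal basis of $L^2(\R^n,\R^{l+1})$ consisting of sufficiently smooth functions adapted to $\Delta^2$ (for instance eigenfunctions of a suitable self-adjoint realization, or a fixed smooth dense family), project \eqref{eqapprox1} onto the span $V_m$ of the first $m$ of them, and solve the resulting finite-dimensional second-order ODE system for $v_\ep^{(m)}$. Local existence for the ODE is immediate since $\nabla F$ is smooth and globally Lipschitz on bounded sets; global existence on all of $\R$ will follow once the energy bound is established, because the energy controls $\|\partial_t v_\ep^{(m)}\|_{L^2}$ and $\|\Delta v_\ep^{(m)}\|_{L^2}$ and prevents blow-up. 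The key computation is to test the projected equation with $\partial_t v_\ep^{(m)}$: using that $\partial_t^2 v_\ep^{(m)}$ pairs with $\partial_t v_\ep^{(m)}$ to give $\tfrac12\tfrac{d}{dt}\|\partial_t v_\ep^{(m)}\|^2$, that $\Delta^2 v_\ep^{(m)}$ pairs to give $\tfrac12\tfrac{d}{dt}\|\Delta v_\ep^{(m)}\|^2$, and that $\tfrac1\ep\nabla F(u_0+v_\ep^{(m)})$ pairs to give $\tfrac{d}{dt}\tfrac1\ep\int F(u_0+v_\ep^{(m)})\,dx$ by the chain rule, one obtains exactly the time derivative of $E_\ep[u_0+v_\ep^{(m)}]$. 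The only non-variational term is the fixed forcing $\langle \Delta^2 u_0,\partial_t v_\ep^{(m)}\rangle=\langle \Delta u_0,\partial_t\Delta v_\ep^{(m)}\rangle=\tfrac{d}{dt}\langle\Delta u_0,\Delta v_\ep^{(m)}\rangle$, which combines with the $\Delta^2 v_\ep^{(m)}$ contribution so that $\tfrac12\|\Delta v_\ep^{(m)}\|^2+\langle\Delta u_0,\Delta v_\ep^{(m)}\rangle$ completes to $\tfrac12\|\Delta(u_0+v_\ep^{(m)})\|^2-\tfrac12\|\Delta u_0\|^2$; hence the full energy $E_\ep[u_\ep^{(m)}]$ is in fact \emph{constant} in time at the Galerkin level, and equals $E_0$ by the initial conditions $v_\ep^{(m)}(0)=0$, $\partial_t v_\ep^{(m)}(0)=P_m u_1$. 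This yields the uniform bounds
\[
\sup_{t\in\R}\Big(\|\partial_t v_\ep^{(m)}(t)\|_{L^2}^2+\|\Delta v_\ep^{(m)}(t)\|_{L^2}^2+\tfrac2\ep\!\int_{\R^n}\!F(u_\ep^{(m)})\,dx\Big)\le 2E_0,
\]
independently of $m$ (and with $\ep$ fixed).

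With these bounds in hand I would pass to the limit $m\to\infty$. The uniform control gives, along a subsequence, weak-$*$ limits $v_\ep\in L^\infty(\R;\dot H^2)$ and $\partial_t v_\ep\in L^\infty(\R;L^2)$; on the nonlinear term one uses that $\nabla F$ is bounded and continuous together with a compactness argument (Aubin–Lions on bounded space-time regions, noting $\Delta v_\ep^{(m)}$ bounded in $L^2_{\loc}$ and $\partial_t v_\ep^{(m)}$ bounded in $L^2_{\loc}$ give local strong $L^2$ convergence of $v_\ep^{(m)}$, hence a.e.\ convergence and, by dominated convergence using boundedness of $\nabla F$, convergence of $\nabla F(u_0+v_\ep^{(m)})$) to identify the limit equation weakly against test functions in $W$. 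Since each Galerkin identity is linear in the test function and $V_m$ is dense in the relevant space, the weak formulation \eqref{eqapprox1} passes to the limit. The asserted weak continuity of $v_\ep:\R\to H^2$ and $\partial_t v_\ep:\R\to L^2$ follows from the abstract fact recorded just before the lemma: from the weak formulation $v_\ep$ and $\partial_t v_\ep$ are weakly continuous into weaker spaces (indeed $\partial_t^2 v_\ep$ lies in a negative-order space locally, so $\partial_t v_\ep$ is continuous into $H^{-2}$ and $v_\ep$ into $L^2_{\loc}$), while the uniform bounds make them essentially bounded into $L^2$ and $\dot H^2$ respectively; the density/reflexivity lemma then upgrades this to weak continuity and boundedness into the stronger spaces.

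The energy inequality \eqref{energycons} for the limit, finally, is obtained by weak lower semicontinuity: the Galerkin energy equals $E_0$ for every $m$, and each quadratic term ($\|\partial_t v_\ep\|_{L^2}^2$, $\|\Delta u_\ep\|_{L^2}^2$) is weakly lower semicontinuous in its respective weak-$*$ topology, while the penalization term passes by a.e.\ convergence and Fatou; evaluating at a fixed time uses the weak continuity just established so that $\partial_t u_\ep(t)$ and $\Delta u_\ep(t)$ are genuine weak limits of the time slices. This gives $E_\ep[u_\ep](t)\le E_0=E_\ep[u_\ep](0)$ for every $t\in\R$. I expect the main obstacle to be the non-integrability of $u_0$: it forces every estimate through the shifted variable $v_\ep$ and requires careful bookkeeping of the fixed $\Delta^2 u_0$ forcing in the energy identity (so that it completes the square into $\|\Delta u_\ep\|^2$ rather than producing an uncontrolled term), and it rules out the global-in-space cut-off arguments available in the finite-speed-of-propagation setting, so compactness for the nonlinearity must be extracted locally and combined with the boundedness of $\nabla F$.
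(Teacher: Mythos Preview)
Your proof is correct and takes a genuinely different route from the paper. The paper does \emph{not} use a Galerkin scheme; instead it regularizes the problem in three separate ways---mollifying $u_1$ to $u_{1,i}\in H^2$, mollifying $u_0$ to $u_{0,j}$ with $\Delta^2 u_{0,j}\in L^2$, and multiplying the nonlinearity by a spatial cut-off $\alpha_k=\mathbf 1_{B(0,k)}$---so that the modified system can be solved classically via semigroup theory (the operator $A=\begin{pmatrix}0&-I\\\Delta^2&0\end{pmatrix}$ on $H^2\times L^2$ with a globally Lipschitz $C^1$ perturbation). The authors then carry out three successive limits $i\to\infty$, $j\to\infty$, $k\to\infty$, each relying on the energy identity at the regular level together with interpolation and Arzel\`a--Ascoli to extract strong local compactness. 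The cut-off $\alpha_k$ is needed precisely because the mollified $u_{0,j}$ need not take values in $S^l$, so $\int F(u_{0,j})\,dx$ could otherwise be infinite.

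Your Galerkin approach sidesteps this entirely: since you never mollify $u_0$, the identity $F(u_0)=0$ and the bound $|\nabla F(u_0+v)|\le c|v|$ (which the paper also invokes, in its Step~4) make the nonlinear term directly tractable in $L^2$, and the forcing $\Delta^2 u_0\in H^{-2}$ pairs cleanly against $\partial_t v^{(m)}\in V_m\subset H^2$. The energy computation you describe---completing $\tfrac12\|\Delta v^{(m)}\|^2+\langle\Delta u_0,\Delta v^{(m)}\rangle$ to $\tfrac12\|\Delta u_\ep^{(m)}\|^2-\tfrac12\|\Delta u_0\|^2$---is exactly right, and gives a conserved Galerkin energy equal to $\tfrac12\|P_m u_1\|^2+\tfrac12\|\Delta u_0\|^2\le E_0$ (not literally $E_0$, a harmless slip). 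Your single limit $m\to\infty$ via Aubin--Lions on bounded space-time sets is more economical than the paper's three-stage procedure; the price is that you must choose the basis with some care (there are no $L^2$-eigenfunctions of $\Delta^2$ on $\R^n$, so use e.g.\ Hermite functions or a smooth compactly supported dense family in $H^2$, as you note). Both approaches recover weak continuity of $v_\ep$ and $\partial_t v_\ep$ from the abstract density/reflexivity fact stated just before the lemma, and both obtain the final energy \emph{inequality} by weak lower semicontinuity.
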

\begin{proof}
1) To construct the function $v=u_\ep-u_0$,
we first study a regularized problem (and we drop the subscript $\ep$).
Let $u_0$ and $u_1$ be the given data. By means of standard mollifiers,  we obtain functions 
$u_{1,i}$ in $H^2$ converging to $u_1$ in $L^2$ as $i\to\infty$, as well as 
 $u_{0,j}\in L^\infty$ such that $\Delta u_{0,j}$ and $\Delta^2 u_{0,j}$ belong to  $L^2$,
the maps  $u_{0,j}$ tend to $u_0$ pointwise a.e.\ and with a uniform bound, and $(\Delta u_{0,j})$
converges to $\Delta u_{0}$ in $L^2$ as $j\to\infty$. Finally, let $\alpha_k$ be the characteristic 
function of the ball $B(0,k)$ in $\R^n$. We now introduce the modified equation
\begin{equation}\label{eq:v-ep}
\begin{split}
\partial_t^2v+\Delta^2 v+\tfrac1\ep\alpha_k\nabla F(u_{0,j}+v)+ \Delta^2 u_{0,j}&=0,   \\
v(0,\cdot)=0, \ \ \ \partial_t v(0,\cdot)&=u_{1,i}.
\end{split}
\end{equation}

Define $X=  H^2(\R^n,\R^{l+1}) \times L^2(\R^n,\R^{l+1})$. We have
$(v(t), \partial_tv(t))\in X$ and we look at the operator matrix 
\[A=\begin{pmatrix} 0 & -I \\ \Delta^2 & 0\end{pmatrix}, \qquad 
 D(A)= H^4(\R^n,\R^{l+1}) \times H^2(\R^n,\R^{l+1})\subset X.\]
Using the group version of the Lumer-Phillips theorem, see Corollary~II.3.6 of \cite{EN}, one checks
that $-A$ generates a strongly continuous (unbounded) group. Moreover, the map
\[ G: X\to X, \; G(\ph,\psi)=  \begin{pmatrix} 0 \\  \tfrac1\ep\alpha_k\nabla F(u_{0,j}+\ph )+ \Delta^2 u_{0,j} \end{pmatrix}\]
is globally Lipschitz and $C^1$. (For the differentiability one can employ the Sobolev embedding
$H^ 2\hookrightarrow L^p$ for some $p>2$.).
Slight variants of Theorems~6.1.2 and 6.1.5 in \cite{Pa} hence
provide a unique global solution $v=v_{i,j,k} \in  C(\R,H^4) \cap C^1(\R, H^2) \cap C^2(\R,L^2) $
of the system \eqref{eq:v-ep} in this case.  We can now differentiate the energy 
\[\tilde{E}_{j,k}[v](t):=\int_{\{t\}\times \R^n} 
  \left( \frac12\, |\partial_t v |^2+\frac12\, |\Delta (u_{0,j}+v) |^2 
  +\frac{1}{\varepsilon} \alpha_k F(u_{0,j}+v)\right)\, dx \]
 with respect to $t\in\R$.  Integration by parts yields
 \begin{align*} 
 \partial_t \tilde{E}_{j,k}[v](t) 
  &=\! \int_{\{t\}\times \R^n}\!\!\big( \langle \partial_{t} v,\partial_{tt} v\rangle 
             + \langle \Delta^2 (u_{0,j} \!+v),\partial_{t} v\rangle
             + \langle \tfrac1\ep\alpha_k\nabla F (u_{0,j}\!
    +v),\partial_{t} v\rangle\big) dx\\
    & =0,\\
\tilde{E}_{j,k}[v](t)&=\int_{\R^n} \left( \frac12\, |u_{1,i} |^2+\frac12\, |\Delta u_{0,j} |^2 
               +\frac{1}{\varepsilon} \alpha_k F(u_{0,j})\right)\, dx    
\end{align*}
for all $t\in\R$. In the next steps, we perform the limits $i,j,k\to\infty$ one after the other.
We will not relabel subsequences.

\smallskip

2) As in Theorem~6.1.2 of \cite{Pa}, the solution $(v,\partial_t v)$ of \eqref{eq:v-ep} depends continuously 
in $X$ on the initial data. The sequence  $(v_{i,j,k})_i$ thus tends in $C(\R,H^2) \cap C^1(\R, L^2)$ to a function
$v_{j,k}$ for all $j,k\in\N$, and a subsequence also converges
pointwise a.e.\ in $(t,x)$. Note that, here and throughout the paper,
the space $C(\R,X)$ (for a normed space $X$) consists of  all continuous functions $v:\R \to
X$, and convergence therein refers to locally uniform convergence
(similarly for $C^k(\R,X)$). Consequently, the map 
$v_{j,k}$ satisfies the initial conditions $v_{j,k}(0,\cdot)=0$ and  $\partial_t v_{j,k}(0,\cdot)=u_1$, and it
solves the PDE in \eqref{eq:v-ep} weakly with test functions in $W$. 

For a fixed $t\in\R$,  a further subsequence $(v_{i,j,k}(t,\cdot))_i$ tends to $v_{j,k}(t,\cdot)$ a.e..
Hence, the above energy equality leads to the identity
\[\tilde{E}_{j,k}[v_{j,k}](t)= \int_{\R^n} \left( \frac12\, |u_{1} |^2+\frac12\, |\Delta u_{0,j} |^2 
               +\frac{1}{\varepsilon} \alpha_k F(u_{0,j})\right)\, dx=: \tilde{E}^{j,k}_0\]
for all $j,k\in\N$ and $t\in\R$. 

\smallskip

3) Now, we pass to the limit $j\to\infty$ for each fixed $k\in\N$. 
 Because of the cut-off $\alpha_k$ and $F(u_0)=0$, the energies $\tilde{E}^{j,k}_0$ tend to
\[E_0:= \frac12\int_{\R^n} \big(  |u_{1} |^2+ |\Delta u_{0} |^2 \big)\, dx,\]
so that $\tilde{E}_{j,k}[v_{j,k}](t)$ is dominated by a number $c_k$ for all $j\in\N$ and $t\in\R$.
This estimate leads to the convergence
\begin{align*}
\Delta v_{j,k} \rightharpoonup \ph_k \ \ \ \text{weak}^*  \text{ in }
 L^\infty(\R;L^2)\quad \text{ and } \quad
\partial_t v_{j,k} \rightharpoonup \psi_k \ \ \ \text{weak}^*  \text{ in } L^\infty(\R;L^2)
\end{align*}
as $j\to\infty$. The functions $\ph_k$ and  $\psi_k$  inherit the energy bound by $E_0$.  We further obtain 
the estimate
\begin{align*}
\|v_{j,k}(t)\|_{L^2} = \Big\|\int_0^t \partial_s v_{j,k}(s) \,ds\Big\|_{L^2} \le 2c_k^{1/2}\,|t|\le 2m c_k^{1/2} 
\end{align*}
for all $t\in [-m,m]$ and $j,k\in \N$. The sequence $(v_{j,k})_j$ is  thus bounded in the spaces $L^\infty(J;H^2)$ 
and $W^{1,\infty}(J;L^2)$ for each $k\in\N$ and each bounded interval $J\subseteq \R$.  Proposition~1.1.4 in 
\cite{Lu} implies the interpolative embedding 
\begin{equation}\label{eq:int}
L^\infty(J;H^2)\cap W^{1,\infty}(J;L^2)\hookrightarrow C^{1-\beta}(J;H^{2\beta})
\end{equation}
for $\beta \in (0,1)$. So, by the Arzel{\`a}-Ascoli theorem, $(v_{j,k})_j$ tends to a function $v_k$ strongly
in  $C(J;H^\theta_{\loc})$ for each $\theta\in[0,2)$ and hence in $C(\R ;H^\theta_{\loc})$
and pointwise a.e., for a 
diagonal sequence. A standard test function argument then yields that $\ph_k=\Delta v_k$ and $\psi_k=\partial_t v_k$.
In particular, $v_k$ belongs to $L^\infty(J;H^2)\cap W^{1,\infty}(J;L^2)\cap C(\R ;H^\theta)$, $v_k(0)=0$, and $\Delta v_k$ is weakly 
continuous with values in $L^2$. Moreover, it satisfies the energy inequality
\[\tilde{E}_{k}[v_k](t):=\int_{\{t\}\times \R^n} 
  \left( \frac12\, |\partial_t v_k |^2+\frac12\, |\Delta (u_{0}+v_k) |^2 
  +\frac{1}{\varepsilon} \alpha_k F(u_{0}+v_k)\right)\, dx\le E_0\]
 for all $t\in\R$ and $k\in\N$.

Since the nonlinear term has compact support in space, we next deduce that $v_k$ satisfies the PDE 
in \eqref{eq:v-ep} for $u_0$ instead of $u_{0,j}$ weakly with test functions in $W$. This equation further 
shows that the weak derivative $\partial^2_{t}v_k$ actually belongs to $L^\infty_{\textrm{loc}}(\R; H^{-2})$
so that $\partial_t v_k$ is continuous from $\R$ to $H^{-2}$ and, as seen above,  essentially bounded 
in $L^2$. As a result, the map $t\mapsto\partial_t v_k(t)$ is bounded and weakly continuous  in $L^2$. 
Since $\partial_t v_{j,k}$ converges weak$^*$  in $L^\infty(\R;L^2)$ and vanishes at $t=0$, we conclude 
that  $\partial_t v_k(0)=0$.
 
 \smallskip

4) In a final step, we let $k\to\infty$. We can proceed as in Step 3) to construct a limit
function $v$ with the desired properties. There is  only one  difference in the derivation of
the PDE for $v$. To apply the
dominated convergence theorem, observe that $|\nabla F(u_0+v_k)|$ is bounded by $c\,|v_k|$ and that 
a converging sequence in $L^2$ has a subsequence with a majorant in $L^2$. Finally, the function
$u_\ep=u_0+v$ satisfies the assertions.
\end{proof}

Based on the energy estimate \eqref{energycons}, we can now pass to the limit $\ep\to0$ in  \eqref{eqapprox}.
The special form of the penalization term implies that the resulting weak limit $u$ takes values in $S^l$.
As in \cite{Fr}, we employ the equation \eqref{eqdivergence} in divergence form to show that $u$  indeed
solves of \eqref{EQsphere} weakly. To identify its initial values, we have to assume that $u_1$ maps
into the tangent space of $S^l$.

\begin{proof}[Proof of Theorem \ref{thm:global}]
1) We use the functions $u_j=u_0+v_j$ from Lemma~\ref{lem:v-ep}, where $v_j=v_{\ep_j}$ 
for some $\ep_j\to0^+$. Let $\Lambda \in \R^{(l+1)\times (l+1)}$ be 
skew-symmetric and $\ph\in C_c^\infty(\R\times \R^n)$. We take $\ph\Lambda u_j$ as a test function
for \eqref{eqapprox}. (It does not belong to $V$, in general, but the regularity provided by  Lemma~\ref{lem:v-ep}
suffices here.) 
Since $\nabla F(u_j)$ is a scalar multiple of  $u_j$, we can argue
as in the first part of  the proof of Lemma~\ref{divergence} and conclude that $u_j$ fulfills
the equation
\begin{align}
0=\partial_t\langle \partial_t u_j , \Lambda u_j \rangle
  +\Delta \langle \Delta u_j , \Lambda u_j \rangle 
  -2\Div \langle \Delta u_j ,\Lambda \nabla u_j \rangle.\label{conservationapprox}
\end{align}
in the distributional sense.

2) Starting from the energy estimate \eqref{energycons}, we can next pass to the limit $\ep_j\to0$ as in Step 3)
of the proof of Lemma~\ref{lem:v-ep} (again without relabelling subsequences). The functions 
$v_j$ then converge strongly in $C(\R; H^\theta_{\loc})$ for $\theta<2$ and pointwise a.e.\ 
to  a map $v\in C(\R; H^\theta)$. Moreover,  $\partial_t v_j$ and $\Delta v_j$ 
tend to $\partial_t v$ and $\Delta v$ weak$^*$ in $L^\infty(\R, L^2)$. Combining these facts, we infer that
$\Delta v:\R\to L^2$ is bounded and weakly continuous. 
The limit $u:=u_0+v$ thus  satisfies $u(0,\cdot)=u_0$ and $E[u](t)\le E_0$ for all $t\in\R$. 
Thanks to \eqref{conservationapprox} and the convergence  of $v_j$, the function
$u$ solves \eqref{eqdivergence} distributionally. 

The energy bound \eqref{energycons} further says that $ \| F(u_j(t,\cdot))\|_1 \le \ep_j E_0$ for all $j\in\N$
and $t\in\R$. For each bounded interval $J\subseteq \R$, Fatou's Lemma now implies that
\[
\int_{J\times \R^n} F(u)\, dx\, dt\le \liminf_{j\to \infty} \int_{J\times \R^n} F(u_j)\, dx \, dt =0.
\]
Hence, $F(u)=0$ and therefore $u(t,x)\in S^l$ for a.e.\ $(t,x)\in\R\times \R^n$. The continuity of
$u-u_0:\R\to L ^2$  then implies that  $u(t,x)$ belongs  $S^l$ for each $t\in\R$ and a.e.\ $x\in\R^ n$.
Since $u\in Z$, the map $\nabla u$ is contained  $L^\infty(\R, L^4)$ by \eqref{L4}. 
We can now deduce that $u$ weakly 
solves  \eqref{eqdivergence} with test functions in $W_s$, and so $u$ is a weak
solution of \eqref{EQsphere} by Lemma~\ref{lem:v-ep}.
Moreover, the equation $|u|^2=1$ yields $\langle \partial_t u,u\rangle=0$ so that  $\partial_t u(t,x)$ is contained
in the tangent space $T_{u(t,x)}S^ l$ for a.e.\ $(t,x)$. 

3) We still have to show the weak continuity of $\partial_t u, \Delta u:\R\to L^2$ and that $\partial_t u(0,\cdot)=u_1$. 
So far we know that the first map is essentially bounded.  Let $\Lambda \in \R^{(l+1)\times (l+1)}$ again be 
skew-symmetric. The equation \eqref{eqdivergence} and the above stated
regularity properties of $u$ imply that $\partial_t\langle \partial_t u, \Lambda u\rangle$ is bounded in 
$H^ {-2}+ W^ {-1,4/3}$. Hence, the function $\psi:t\mapsto \langle \partial_t u, \Lambda u\rangle$ is continuous 
in this space. Consequently, $\psi$ is bounded  and weakly continuous in $L^2$. Step 2) implies that for a.e.\ $t\in\R$
the vector $\partial_t u(t,x)$ belongs to  $T_{u(t,x)}S^ l$ for a.e.\ $x\in\R^ n$. 
In view of \eqref{eq:Lambda}, by modifying  $\partial_t u(t,\cdot)$ for $t$ in set of measure 0  we obtain a 
representative $\partial_t u$ which is bounded and weakly continuous as a map from $\R$ to $L^2$.

Next, we multiply the equations \eqref{conservationapprox} for $u_j$ and \eqref{eqdivergence} for $u$
 by a  function $\varphi\in C^\infty_c(\R\times \R^n)$. We integrate by parts in $t\in [0,\infty)$ 
 with values in $H^ {-2}+ W^ {-1,4/3}$ and subtract the two resulting equations, which yields
\begin{align*}
\int_{\{0\}\times \R^n} \langle u_1- \partial_t u ,\Lambda u_0\rangle \varphi \,dx
  &=  \int_0^\infty \int_{\R^n} \left( -\langle \partial_t u_j , \Lambda u_j\rangle
    +\langle \partial_t u , \Lambda u\rangle \right)\partial_t \varphi \,dx\,dt\\
  &\quad  + \int_0^\infty \int_{\R^n} \left( \langle \Delta u_j, \Lambda u_j\rangle
   -\langle \Delta u, \Lambda u\rangle \right)\Delta \varphi \,dx\,dt \\
&\quad+2 \int_0^\infty \int_{\R^n} \left( \langle \Delta u_j ,\Lambda \nabla u_j\rangle
  -\langle \Delta u ,\Lambda \nabla u\rangle\rangle \right) \nabla \varphi \,dx\,dt.
\end{align*}
By Step 2), the right hand side converges to zero as $j\to \infty$ so that 
\[
\langle \partial_t u(0) -u_1,\Lambda u_0\rangle =0.
\]
As both $u_1$ and $\partial_t u(0)$ belong to $T_{u_0}S^l$ a.e., we conclude that  $\partial_t u(0)=u_1$ and thus $\partial_t u(t,\cdot)\rightharpoonup u_1$ weakly in $L^2(\R^n)$. 
\end{proof}

\bibliographystyle{amsplain}

\end{document}